\newtheorem{theorem}{Theorem}[section]
\newtheorem{corollary}[theorem]{Corollary}
\newtheorem{definition}[theorem]{Definition}
\DeclareMathOperator{\tr}{tr}
\title{Bounds via spectral radius-preserving row sum expansions}
\author{Joseph P. Stover\footnote{Department of Mathematics, Gonzaga University, 502 E Boone Ave, Spokane, WA 99258. E-mail:  \href{mailto:stover@gonzaga.edu}{stover@gonzaga.edu}}}
\begin{document}

\maketitle

\let\thefootnote\relax\footnotetext{Keywords: block matrix; spectral radius; nonnegative matrix}
\let\thefootnote\relax\footnotetext{AMS 2020 Subject Classification: 15A18, 15A42}

%%%%%%%%%%%%%%%%%%%%%%%%%%%%%%%%%%%%
%% Abstract
%%%%%%%%%%%%%%%%%%%%%%%%%%%%%%%%%%%%
%
\begin{abstract}\addcontentsline{toc}{section}{Abstract}
We show a simple method for constructing larger matrices but preserving the spectral radius. This yields a sufficient criteria for two square matrices of arbitrary dimension have the same spectral radius, a way to compare spectral radii of two matrices, and a way to derive new upper and lower bounds on spectral radius which give the standard row sum bounds as a special case.
\end{abstract}

\section{Introduction}
For non-negative square matrices it is well known that the spectral radius is between the minimum and maximum row sums. If the row rums are constant, then that sum is the spectral radius. It is also know that equality between the spectral radius and either the minimal or maximal row sum for an irreducible matrix implies its row sums are constant. See Theorem 1.1 in \cite{nonnegmat_minc}. Here we will take a square matrix and increase its dimension by expanding one of its diagonal elements (or a square diagonal block) into a larger square block while still preserving the spectral radius of the original matrix. Then we use this to derive new bounds on the spectral radius and show the standard row sum bounds are a special case.

Let $M\geq0$ be a nonnegative square matrix with spectral radius $\rho(M)$. Consider diagonal element $M_{ii}=d$. Via a permutation we can simply assume $d$ is the lower right most diagonal component
$$M=\left[\begin{matrix}
A & \mathbf b\\
\mathbf c^\top & d
\end{matrix}\right]$$
where $A$ is an $(n-1)\times (n-1)$ block and $\mathbf b$ and $\mathbf c$ are $(n-1)\times 1$ vectors.

We now create a new matrix
$$\tilde M=\left[\begin{matrix}
A & B\\
C & D
\end{matrix}\right].$$
where $D$ is a square matrix of arbitrary dimension $s\times s$ with constant row sums equal to $d$, $B$ is a matrix whose $i^{th}$ row sum equals the $i^{th}$ element of $\mathbf b$ (but arbitrary entries otherwise), and $C$ is a matrix with each row identical to $\mathbf c^\top$. We refer to $\tilde M$ as a \textit{row sum expansion of $M$ on $d$}. The overal dimensions of $\tilde M$ are now $(n-1+s)\times(n-1+s)$.

It is in fact the case that $\rho(M)=\rho(\tilde M)$ (which we prove in Theorem \ref{thm:row_contr_spec_rad_eq} below). Similarly, one can perform \textit{column sum expansions} instead (with the rules on $B$ and $C$ symmetrically adjusted). One can also expand multiple diagonal elements simultaneously, some using row sums and some using column sums, as long as the adjacent blocks are created appropriately. In fact, one can perform any sequence of permutations, transposes, and various mixes of column and row sum expansions (always transforming diagonal elements into square blocks) while still preserving the spectral radius. The resulting matrix (with larger dimension) always has the same spectral radius.

We first give some examples of row and column sum expansions to familiarize the reader with the process. Then in Section \ref{sec:specradpres} we prove that this process preserves the spectral radius. Finally in Section \ref{sec:specradbound} we give bounds on the spectral radius of a matrix based on this idea and give a way to compare spectral radii of two matrices with different dimensions.

\paragraph{Examples of matrix expansions}
Let $A$ and $B$ be nonnegative square matrices, we write $A\sim B$ if and only if $\rho(A)=\rho(B)$.

When we perform an expansion of a diagonal element into a block, we sometimes add additional parentheses to illustrate the new block when was previously a single component. Here is an introductory example:
$$
[5]\sim
\left[\begin{matrix}
2 & 3\\
4 & 1
\end{matrix}\right]\sim
\left[\begin{matrix}
2 & 4\\
3 & 1
\end{matrix}\right]\sim
\left[\!\!\begin{array}{ccc}
\left(\begin{array}{cc}
	1 & 1 \\
	2 & 0 
\end{array}\right) & 
\left(\begin{array}{c}
	4 \\
	4 
\end{array}\right)\\
\left(\begin{array}{cc}
	1 & 2  
\end{array}\right) & 1
\end{array}\!\!\right].
$$
where the first step we simply created a $2\times2$ matrix with constant row sums of 5 and then transposed the result in the next step. The final step, we row-sum expanded the first diagonal element. This requires us to repeat the horizontal off-diagonal component 4 (for a row sum expansion we always replicate exactly any rows to the left or right) and we can expand the vertical off-diagonal element 3 into an arbitrary $1\times2$ block as long as its row sum is 3 (for a row sum expansion we are allowed to replicate any above or below matrix elements into arbitrary rows that each sum to the original component they are expanded from).

Here is an example with a simultaneous row sum expansion on both diagonal elements into different sized diagonal blocks. In this example, we simply require the row sums in each block to be constant and equal to the original matrix component being expanded. 
$$
\left[\begin{matrix}
5 & 7\\
2 & 4
\end{matrix}\right]\sim
\left[\!\!\begin{array}{ccc}
\left(\begin{array}{cc}
	3 & 2 \\
	0 & 5 
\end{array}\right) & 
\left(\begin{array}{ccc}
	1 & 4 & 2\\
	3 & 1.5 & 2.5
\end{array}\right)\\
\left(\begin{array}{cc}
	1 & 1  \\
	0 & 2  \\
	1.5 & 0.5  
\end{array}\right) & 
\left(\begin{array}{ccc}
	3 & 0.5 & 0.5\\
	1 & 1 & 2\\
	0 & 4 & 0
\end{array}\right)
\end{array}\!\!\right].
$$

Here is an example where we perform a column expansion on the first diagonal element and a row expansion on the second. In this case, the lower left block must be the lower left matrix component repeated in all positions, but the upper right block just needs to sum to the original upper right component:
$$
\left[\begin{matrix}
5 & 8\\
7 & 3
\end{matrix}\right]\sim
\left[\!\!\begin{array}{ccc}
\left(\begin{array}{cc}
	4 & 2 \\
	1 & 3 
\end{array}\right) & 
\!\!\!\left(\begin{array}{ccc}
	0 & 1 \\
	2 & 5 
\end{array}\right)\\
\left(\begin{array}{cc}
	7 & 7  \\
	7 & 7    
\end{array}\right) & 
\!\!\!\left(\begin{array}{ccc}
	3 & 0 \\
	2 & 1 
\end{array}\right)
\end{array}\!\!\right].
$$
Again, we could go on concocting complicated mixes of transposes, permutations, and row and column sum expansions. This allows the overall mix of row and column sums to become almost any collection of nonnegative numbers, thus the standard bounds on spectral radius do not make it obvious that the spectral radius is preserved.

\section{Row sum expansions preserve spectral radius}\label{sec:specradpres}
We now prove that row sum expansions preserve the spectral radius. Here it is important that we are expanding diagonal elements into square blocks. In general, a row expansion that does not expand diagonal elements into square blocks does not preserve the spectral radius.
\begin{theorem}
	\label{thm:row_contr_spec_rad_eq}
Let $M$ be a non-negative matrix of dimension $n\times n$, and let $\tilde M$ be a row sum expansion of $M$ on any one of its diagonal elements. Then $\rho(M)=\rho(\tilde M)$.
\end{theorem}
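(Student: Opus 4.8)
The plan is to compute the characteristic polynomial of $\tilde M$ in closed form and factor it as $\det(\lambda I - \tilde M) = q(\lambda)\,\det(\lambda I_n - M)$, where $q$ is a polynomial of degree $s-1$ whose roots are eigenvalues of $D$; a short comparison of moduli then forces $\rho(\tilde M)=\rho(M)$. First I would record the three structural identities that encode the definition of a row sum expansion. Writing $\mathbf 1_s$ for the all-ones vector of length $s$, the block $D$ satisfies $D\mathbf 1_s = d\mathbf 1_s$ (constant row sums $d$), the block $B$ satisfies $B\mathbf 1_s = \mathbf b$ (its $i^{th}$ row sums to the $i^{th}$ entry of $\mathbf b$), and $C = \mathbf 1_s\mathbf c^\top$ (every row equals $\mathbf c^\top$). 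The crucial consequence of the first identity is that $\mathbf 1_s$ is an eigenvector of $\lambda I_s - D$, so that $(\lambda I_s - D)^{-1}\mathbf 1_s = (\lambda - d)^{-1}\mathbf 1_s$ whenever $\lambda \ne d$ and $\lambda I_s - D$ is nonsingular.

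Next I would apply the Schur complement determinant formula to $\lambda I - \tilde M$ relative to the lower-right block $\lambda I_s - D$. Substituting $C = \mathbf 1_s\mathbf c^\top$ and then the eigenvector identity makes the off-diagonal correction collapse, since $B(\lambda I_s - D)^{-1}\mathbf 1_s\mathbf c^\top = (\lambda - d)^{-1}\mathbf b\mathbf c^\top$, yielding
$$\det(\lambda I - \tilde M) = \det(\lambda I_s - D)\,\det\!\Big(\lambda I_{n-1} - A - \tfrac{1}{\lambda - d}\,\mathbf b\mathbf c^\top\Big).$$
Running the identical Schur complement step on $M$ itself, now relative to the scalar entry $d$, identifies the second factor as $\det(\lambda I_n - M)/(\lambda - d)$. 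Because $d$ is an eigenvalue of the nonnegative, constant-row-sum matrix $D$ (with eigenvector $\mathbf 1_s$), the factor $\lambda - d$ divides $\det(\lambda I_s - D)$, so $q(\lambda) := \det(\lambda I_s - D)/(\lambda - d)$ is a genuine polynomial of degree $s-1$ and we arrive at $\det(\lambda I - \tilde M) = q(\lambda)\,\det(\lambda I_n - M)$. The derivation presumes $\lambda \ne d$ and $\lambda I_s - D$ invertible, but as both sides are polynomials agreeing off a finite set, the identity extends to all $\lambda$ by continuity.

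Finally I would read off the spectra. As a sanity check giving one inclusion directly, note that replicating an eigenvector $(\mathbf x, y)^\top$ of $M$ into $(\mathbf x, y\mathbf 1_s)^\top$ produces an eigenvector of $\tilde M$ for the same eigenvalue, using exactly the three identities above; so $\rho(M)\le\rho(\tilde M)$. The factorization sharpens this: the eigenvalues of $\tilde M$ are precisely those of $M$ together with the roots of $q$, and every root of $q$ is an eigenvalue of $D$. Since $D\ge 0$ has constant row sums $d$, each of its eigenvalues has modulus at most $\rho(D)=d$; and since $d$ is a diagonal entry of the nonnegative matrix $M$, monotonicity of the spectral radius under padding by zeros gives $d\le\rho(M)$. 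Hence all roots of $q$ lie in the disk of radius $\rho(M)$, while $\rho(M)$ is itself an eigenvalue of $\tilde M$, which forces $\rho(\tilde M)=\rho(M)$. I expect the main obstacle to be the clean collapse of the Schur complement, namely verifying that the $(\lambda-d)^{-1}$ terms cancel exactly and that the resulting polynomial identity survives across the removable singularities at $\lambda = d$ and at the eigenvalues of $D$; once that is in hand the conclusion is a brief comparison of moduli using standard nonnegative-matrix facts.
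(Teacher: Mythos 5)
Your proposal is correct, and it takes a genuinely different route from the paper. The paper argues entirely with left Perron eigenvectors: it lifts a left eigenvector $(\mathbf x^\top, y)$ of $M$ at $\rho(M)$ to a left eigenvector $(\mathbf x^\top, \mathbf y^\top)$ of $\tilde M$ by solving $\mathbf x^\top B = \mathbf y^\top(\rho I_s - D)$ for $\mathbf y$ (giving $\rho(\tilde M)\ge\rho(M)$), and then rules out $\rho(\tilde M)>\rho(M)$ by projecting a left Perron eigenvector of $\tilde M$ down, via row sums, to an eigenpair of $M$ with eigenvalue exceeding $\rho(M)$, a contradiction. Your Schur-complement factorization $\det(\lambda I-\tilde M)=q(\lambda)\det(\lambda I_n-M)$ with $q(\lambda)=\det(\lambda I_s-D)/(\lambda-d)$ proves something stronger: the spectrum of $\tilde M$ is exactly that of $M$ together with the eigenvalues of $D$ (one copy of $d$ removed), after which the radius statement follows from the elementary bounds $\rho(D)=d$ and $d\le\rho(M)$. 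This buys you (i) the full spectral relationship rather than just the radius; (ii) independence from Perron--Frobenius existence theory, with nonnegativity needed only for $D$ and $M$, not for $B$ or $C$; and (iii) avoidance of the one delicate point in the paper's lift, namely that when $\rho(M)=d$ the matrix $\rho I_s - D$ is singular and solvability of $\mathbf x^\top B=\mathbf y^\top(\rho I_s-D)$ must be argued through row-sum matching. What the paper's route buys in exchange is that it never leaves nonnegative-matrix/eigenvector territory---no determinants, no removable-singularity bookkeeping---and both halves of its argument directly exhibit the eigenvectors involved. Incidentally, your ``sanity check'' is itself a simplification of the paper's lift: right eigenvectors lift explicitly as $(\mathbf x, y)\mapsto(\mathbf x, y\mathbf 1_s)$ with no linear system to solve, which alone gives $\sigma(M)\subseteq\sigma(\tilde M)$ and hence $\rho(M)\le\rho(\tilde M)$.
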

\begin{proof}
Without loss of generality, we prove the theorem for the last diagonal component of $M$ by permutational similarity. Let
$$M=\left[\begin{matrix}
	A & \mathbf b\\
	\mathbf c^\top & d
\end{matrix}\right]$$	
where $A$ is a $(n-1)\times (n-1)$ block, $\mathbf b$ and $\mathbf c$ is a $(n-1)\times 1$ vectors, and $d$ is a single diagonal component. 

Since $M\geq0$, we know that its spectral radius $\rho\geq0$ is an eigenvalue with nonnegative, nonzero left eigenvector $(\mathbf x^\top,y)$ where $\mathbf x$ has $n-1$ components and $y$ is a single component (see Theorem 8.3.1 in \cite{matan_horn}). Assume the eigenvector sums to one $y+\sum_{i=1}^{n-1}x_i=1$. Note that we do not require any uniqueness, so there may be several such eigenvectors.

We have that 
$$\begin{aligned}
	(\mathbf x^\top, y)\left[\begin{matrix}
		A & \mathbf b\\
		\mathbf c^\top & d
	\end{matrix}\right]&=(\mathbf x^\top A+y\mathbf c^\top, \mathbf x^\top\mathbf b+yd)\\
	&=(\rho \mathbf x^\top , \rho y)
\end{aligned}$$
thus giving 
\begin{equation}
	\label{eq:M_eigs1}
	\mathbf x^\top\mathbf b=y(\rho-d)
\end{equation}
\begin{equation}
	\label{eq:M_eigs2}
	\mathbf x^\top (\rho I_{n-1}-A)=y\mathbf c^\top
\end{equation}
It is already known that $\rho\geq d$. If $\rho=d$, then $\mathbf x$ is orthogonal to $\mathbf b$ and thus has zeros wherever $\mathbf b$ has positive components (recall that everything is nonnegative). If $\rho$ is also an eigenvalue of principle submatrix $A$, then we can find infinitely many $\mathbf x$, but, again, we do not require uniqueness here. We just require some nonzero, nonnegative eigenvector to exist.

We now create a new $(n-1+s)\times (n-1+s)$ matrix $\tilde M\geq0$ by expanding $d$ into a $s\times s$ matrix $D$ with constant row sums $\sum_{j=1}^sD_{ij}=d$ for all $i=1,2,\ldots,s$.
$$\tilde M=\left[\begin{matrix}
	A & B\\
	C & D
\end{matrix}\right]$$
where $B$ is an $(n-1)\times s$ block whose row sums equal the elements of vector $\mathbf b$: $\sum_{j=1}^sB_{ij}=b_i$ for $i=1,2,\ldots,n-1$, and $C$ is an $s\times (n-1)$ block where each row is identical $C_{i-}=\mathbf c^\top$ for each $i=1,2,\ldots,s$. 

Similarly, this expanded matrix has a nonnegative spectral radius $\tilde \rho\geq 0$ which is an eigenvalue with nonnegative, nonzero eigenvector $(\tilde {\mathbf x}^\top,\mathbf y^\top)$. Again we assume that $\sum_{i=1}^s y_i+\sum_{i=1}^{n-1}x_i=1$. We will show that $\tilde {\mathbf x}=\mathbf x$ and $\sum y_i=y$ and $\tilde\rho=\rho$.

First, we show that $\rho,({\mathbf x}^\top,\mathbf y^\top)$ is an eigenpair for $\tilde M$ with $\mathbf y$ being a vector that solves	
\begin{equation}\label{eq:rhoy}
	{\mathbf x}^\top B=\mathbf y^\top(\rho I_s-D).
\end{equation}

There may be several such vectors $\mathbf y$ (if $\rho=d$ the constant row sum for matrix $D$). We choose a $\mathbf y$ so that this eigenvector sums to one requiring $y=\sum_{i=1}^s y_i=1-\sum_{i=1}^{n-1}x_i$. This is possible since the row sums of $\mathbf x^\top B$ are all equal to $\mathbf x^\top \mathbf b$ and the row sums of $\mathbf y^\top(\rho I_s-D)$ are all equal to $\left(\sum_{i=1}^s y_i\right)(\rho-d)=y(\rho-d)$ which are equivalent by (\ref{eq:M_eigs1}).

Since block $C$ simply has vector $\mathbf c^\top$ repeated $s$ times for its rows, we have 

\begin{equation}\label{eq:yC}
	\mathbf y^\top C=\left(\sum_{i=1}^s y_i\right) \mathbf c^\top=y\mathbf c^\top.
\end{equation}

Using (\ref{eq:M_eigs2}) and (\ref{eq:yC}) we see that 
\begin{equation}\label{eq:xAyC}
	{\mathbf x}^\top A+\mathbf y^\top C={\mathbf x}^\top A+y\mathbf c^\top=\rho {\mathbf x}^\top.
\end{equation}

Using (\ref{eq:rhoy}), we see that 
\begin{equation}\label{eq:xByD}
	{\mathbf x}^\top B+\mathbf y^\top D=\rho \mathbf y^\top.
\end{equation}

Now we have by using (\ref{eq:xAyC}) and (\ref{eq:xByD}) that 
$$\begin{aligned}
({\mathbf x}^\top, \mathbf y^\top)\left[\begin{matrix}
		A & B\\
		C & D
	\end{matrix}\right]&=({\mathbf x}^\top A+\mathbf y^\top C, {\mathbf x}^\top B+\mathbf y^\top D).\\
&=( \rho {\mathbf x}^\top , {\rho} \mathbf{y}^\top)
\end{aligned}$$
This shows that $\rho,({\mathbf x}^\top,\mathbf y^\top)$ is an eigenpair for $\tilde M$. This establishes that $\tilde \rho \geq \rho$.

Now we show that $\tilde\rho=\rho$ by contradiction. Assume that $\tilde\rho>\rho$ and that we have eigenpair $\tilde\rho,(\tilde{\mathbf x}^\top,\tilde{\mathbf y}^\top)$ for $\tilde M$. Then we have
$$\tilde{\mathbf x}^\top B=\tilde{\mathbf y}^\top(\tilde\rho I_s-D)$$
which, after checking the row sums on each side, yields
\begin{equation}\label{eq:tyrhod}
	\tilde{\mathbf x}^\top \mathbf b=\left(\sum_{i=1}^s \tilde y_i\right)(\tilde\rho-d).
\end{equation}
And of course, we also have that
\begin{equation}\label{eq:tyC}
	\tilde{\mathbf x}^\top (\tilde\rho I_{n-1}-A)=\tilde{\mathbf y}^\top C=\left(\sum_{i=1}^s \tilde y_i\right) \mathbf c^\top
\end{equation}
Together, (\ref{eq:tyrhod}) and (\ref{eq:tyC}) and using (\ref{eq:M_eigs1}) and (\ref{eq:M_eigs2}) with $\tilde\rho,\tilde{\mathbf x},\tilde y= \sum_{i=1}^s \tilde y_i$ instead of $\rho,\mathbf x,y$ show that $\tilde\rho,(\tilde{\mathbf x}^\top,\tilde y)$ is an eigenpair for $M$ which contradicts the fact that $\rho$ is its spectral radius. Hence $\tilde\rho=\rho$.

\end{proof}

This theorem applies to column sum expansions as well (since this is equivalent to row sum expansions on $A^\top$), and hence it applies be any sequence of row and column sum expansions, permutations, and transposes since each step preserves the spectral radius.

\section{Spectral radius bounds via contractions}\label{sec:specradbound}

If we were to perform the reverse procedure, contracting our matrix to lower its dimension, then unless the row or column sums in all blocks are constant (maybe this is achievable with some permutation or transpose), we would have to break the preservation of spectral radius. However, we use this to derive bounds on the spectral radius.

Again, letting $M$ be an $n\times n$ nonnegative square matrix, we define upward and downward row and column sum contractions.

\begin{definition}
	\label{def:drscontr}
	Let $M$ be a nonnegative square matrix. A \emph{downward row sum contraction} of $M$ is denoted $M^\downarrow$ and is a matrix created as follows. First we partition $M$ into a block matrix with $k^2$ blocks denoted $M=(A_{ij})$ for $i,j$ in $\{1,2,...,k\}$ %. 
	requiring that all diagonal blocks $A_{ii}$ are square matrices. 
	In each block, we take the minimum row sum: if block $A_{ij}$ has size $s\times t$ then $a_{ij}=\min_u \{\sum_{v=1}^t (A_{ij})_{uv}\}$. Then we have $M^\downarrow_{ij}=a_{ij}$.
\end{definition}
Similarly we define \emph{upward row sum contraction} of $M$ denoted by $M^\uparrow$ which uses the maximum row sum in each block: $M^\uparrow=(b_{ij})$ where $b_{ij}=\max_u \{\sum_{v=1}^t (B_{ij})_{uv}\}$. Naturally, we also define upward and downward column sum contractions (which are identical to applying row sum contractions to $M^\top$). We could also perform any sequence of permutations, transposes, and row and column sum contractions to arrive at a new matrix of lower dimension.

\paragraph{An example}
Consider the matrix $M$ partitioned as follows:
$$M=\left[\!
\begin{array}{c|ccc|cc}
	1 & 2 & 0 & 3 & 7 & 1\\\hline
	4 & 1 & 0 & 5 & 1 & 0\\
	0 & 2 & 3 & 0 & 0 & 5\\
	5 & 2 & 0 & 1 & 3 & 1\\\hline
	0 & 0 & 3 & 5 & 0 & 2\\
	4 & 1 & 3 & 0 & 1 & 3
\end{array}\!\right].
$$
Then taking the minimum and maximum row sum in each block gives
$$
M^\downarrow=\left[\!
\begin{array}{ccc}
	1 & 5 & 8\\
	0 & 3 & 1 \\
	0 & 4 & 2 
\end{array}\!\right],
\quad
M^\uparrow=\left[\!
\begin{array}{ccc}
	1 & 5 & 8\\
	5 & 6 & 5 \\
	4 & 8 & 4 
\end{array}\!\right].
$$
Notice that before reducing the dimensions of the matrix, we can increase or decrease components of $M$ in order to create an ordering. For this example, here is one possibility with the changed components in bold:
$$
\tilde M^\downarrow=\left[\!
\begin{array}{c|ccc|cc}
	1 & 2 & 0 & 3 & 7 & 1\\\hline
	\mathbf 0 & 1 & 0 & \mathbf 2 & 1 & 0\\
	0 & 2 & \mathbf 1 & 0 & 0 & \mathbf 1\\
	\mathbf 0 & 2 & 0 & 1 & \mathbf 0 & 1\\\hline
	0 & 0 & 3 & \mathbf 1 & 0 & 2\\
	\mathbf 0 & 1 & 3 & 0 & 1 & \mathbf 1
\end{array}\!\right]
\leq \left[\!
\begin{array}{c|ccc|cc}
	1 & 2 & 0 & 3 & 7 & 1\\\hline
	4 & 1 & 0 & 5 & 1 & 0\\
	0 & 2 & 3 & 0 & 0 & 5\\
	5 & 2 & 0 & 1 & 3 & 1\\\hline
	0 & 0 & 3 & 5 & 0 & 2\\
	4 & 1 & 3 & 0 & 1 & 3
\end{array}\!\right]
\leq 
\left[\!
\begin{array}{c|ccc|cc}
	1 & 2 & 0 & 3 & 7 & 1\\\hline
	\mathbf 5 & 1 & 0 & 5 & \mathbf 5 & 0\\
	\mathbf 5 & 2 & 3 & \mathbf 1 & 0 & 5\\
	5 & 2 & \mathbf 3 & 1 & 3 & \mathbf 2\\\hline
	\mathbf 4 & 0 & 3 & 5 & 0 & \mathbf 4\\
	4 & 1 & 3 & \mathbf 4 & 1 & 3
\end{array}\!\right]=\tilde M^\uparrow
$$
It should be intuitively clear that this will always be possible, to arbitrarily decrease or increase some components to make all row sums in each block constant and equal to the minimum or maximum row sum. It is important that we keep the diagonal blocks as square principle submatrices in order to use Theorem \ref{thm:row_contr_spec_rad_eq}. As mentioned above already, a row expansion that does not expand diagonal elements into square blocks does not generally preserve the spectral radius. Hence if our diagonal block of constant sum are now square, then reducing the dimension can change the spectral radius, which we wish to avoid here.

It is a standard result for ordered square nonnegative matrices that $A\leq B$ (component-wise: $A_{ij}\leq B_{ij}$ for all $i,j$) that the spectral radii are also ordered $\rho(A)\leq\rho(B)$ (see Corollary 8.1.20 in \cite{matan_horn}). 
Thus we get 
$$
\rho(\tilde M^\downarrow)\leq \rho(M)\leq\rho(\tilde M^\uparrow)
$$
for any arbitrary downward and upward adjustment of $M$ to make constant row sums in each block. We formalize this as a theorem in the general case.

Let $C_s^\downarrow(M)$ be the set of all $s\times s$ downward contractions of matrix $M$ and $C_s^\uparrow(M)$ be the set of all $s\times s$ upward contractions. Each $M^\downarrow \in C_s^\downarrow(M)$ is an $s\times s$ matrix whose components are equal to the minimum row sum of each block for some partition of $M$, and similarly for any $M^\uparrow\in C_s^\uparrow(M)$. Note that these are distinct from $\tilde M^\downarrow$ and $\tilde M^\uparrow$ which have the same dimensions as $M$ but with some components adjusted downward or upward, respectively, but resulting in the ordering: $\tilde M^\downarrow \leq M \leq \tilde M^\uparrow$. By Theorem \ref{thm:row_contr_spec_rad_eq} we have that $\rho(M^\downarrow)=\rho(\tilde M^\downarrow)$ even though they have different sizes and also that $\rho(M^\uparrow)=\rho(\tilde M^\uparrow)$.

Note that trivially, $[\min_i \{\sum_{j=1}^n M_{ij}\}]\in C_1^\downarrow(M)$ and $[\max_i \{\sum_{j=1}^n M_{ij}\}]\in C_1^\uparrow(M)$ for the minimal and maximal row sums of $M$ viewed as $1\times1$ matrices and that $M\in C_n^\downarrow(M)\cap C_n^\uparrow(M)$.

Let $C^\downarrow(M)=\cup_{j=1}^{n-1} C_j^\downarrow(M)$ and $C^\uparrow(M)=\cup_{j=1}^{n-1} C_j^\uparrow(M)$ be the sets of all possible downward and upward contractions of $M$ (with dimensions less than $M$). It should be clear that each of these sets is finite. The process of creating a downward or upward contraction involves some finite sequence of permutations, transposes, and dimension decreases. Eventually, including more permutations or transposes will create no new matrices, and obviously the dimension can only be decreased to $1\time 1$ at the lowest. Transpose and permutations also preserve the spectral radius, so it is only the dimension reductions that can make the spectral radius smaller or larger. Note that a matrix in $C^\downarrow(M)$ may include several dimension decrease steps in its creation, which could mean the spectral radius undergoes several decreases relative to $M$, and similar for $C^\uparrow(M)$.

\begin{theorem}
\label{thm:rhoboundscontr}
	Let $C^\downarrow(M)$ and $C^\uparrow(M)$ be the set of all downward and upward contractions of nonnegative square matrix $M$. Then
$$\max_{M^\downarrow\in C^\downarrow(M)} \{\rho(M^\downarrow)\}\leq \rho(M)\leq \min_{M^\uparrow\in C^\uparrow(M)} \{\rho(M^\uparrow)\}.$$
\end{theorem}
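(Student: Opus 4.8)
The plan is to prove the two inequalities separately, and for each to establish the stronger \emph{termwise} statement: every $M^\downarrow\in C^\downarrow(M)$ satisfies $\rho(M^\downarrow)\le\rho(M)$, and every $M^\uparrow\in C^\uparrow(M)$ satisfies $\rho(M^\uparrow)\ge\rho(M)$. Since both $C^\downarrow(M)$ and $C^\uparrow(M)$ are finite and (for $n\ge 2$) nonempty---they contain the $1\times1$ minimal and maximal row sum matrices respectively---the extrema in the statement are attained, and $\max_{M^\downarrow}\rho(M^\downarrow)\le\rho(M)$ together with $\rho(M)\le\min_{M^\uparrow}\rho(M^\uparrow)$ follow immediately from the termwise bounds.

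The heart of the argument is a single-step estimate. Suppose $N=P^\downarrow$ is produced from a nonnegative square matrix $P$ by one downward row- or column-sum contraction relative to a partition of $P$ into blocks with square diagonal blocks. As in the discussion preceding the theorem, I would adjust the entries of $P$ downward inside each block so that every block acquires constant row sum equal to its block minimum; call the result $\tilde P^\downarrow$, so that $0\le\tilde P^\downarrow\le P$ componentwise. Monotonicity of the spectral radius on ordered nonnegative matrices (Corollary 8.1.20 in \cite{matan_horn}) gives $\rho(\tilde P^\downarrow)\le\rho(P)$, while the fact that $\tilde P^\downarrow$ is a row-sum expansion of $N$---its diagonal blocks are square with constant row sums and each off-diagonal block has constant row sum equal to the matching entry of $N$---gives $\rho(\tilde P^\downarrow)=\rho(N)$ via Theorem~\ref{thm:row_contr_spec_rad_eq}, whence $\rho(N)\le\rho(P)$. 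The upward case, with $N=P^\uparrow$ and $P\le\tilde P^\uparrow$, is identical with every inequality reversed.

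With the single-step estimate in hand for \emph{arbitrary} nonnegative square matrices, I would finish by induction on the number of operations in the finite sequence of permutations, transposes, and dimension reductions that defines a given element of $C^\downarrow(M)$. Permutations and transposes leave the spectral radius unchanged, and each downward reduction can only weakly decrease it, by the single-step estimate applied to the current (nonnegative, square) intermediate matrix; transitivity of $\le$ then yields $\rho(M^\downarrow)\le\rho(M)$ for the terminal matrix. The same induction with reversed inequalities gives $\rho(M^\uparrow)\ge\rho(M)$ for every $M^\uparrow\in C^\uparrow(M)$, which completes the proof modulo the single-step estimate.

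The main obstacle is the identity $\rho(\tilde P^\downarrow)=\rho(N)$, because a contraction collapses \emph{all} blocks at once, whereas Theorem~\ref{thm:row_contr_spec_rad_eq} expands a single diagonal element and its literal bookkeeping (identical rows in the lower-left block) is not what $\tilde P^\downarrow$ satisfies. I expect the cleanest fix is to prove this all-blocks-at-once version directly through an intertwining relation: letting $U$ be the aggregation matrix that sums coordinates within each block (its $i$th column being the indicator of block $i$, so $U$ has full column rank), the constant-row-sum conditions say exactly that $\tilde P^\downarrow\,U=U\,N$. This makes $U v$ a right eigenvector of $\tilde P^\downarrow$ for every eigenpair $(\lambda,v)$ of $N$, so $\rho(\tilde P^\downarrow)\ge\rho(N)$; and forming $\tilde{\mathbf w}^\top U$ from any nonnegative left eigenvector $\tilde{\mathbf w}$ of $\tilde P^\downarrow$ for $\rho(\tilde P^\downarrow)$ produces a nonzero nonnegative left eigenvector of $N$ for the same value, forcing $\rho(\tilde P^\downarrow)\le\rho(N)$. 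Once this replacement for the one-element expansion is secured, the composition and extremization steps above are routine.
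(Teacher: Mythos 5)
Your proposal is correct, and its outer architecture is the same as the paper's: sandwich $\tilde M^\downarrow\leq M\leq \tilde M^\uparrow$, invoke monotonicity of the spectral radius on nonnegative matrices, identify $\rho(\tilde M^\downarrow)$ with $\rho(M^\downarrow)$ (and likewise upward), then extremize over the finite sets of contractions. Where you genuinely depart from the paper is the key identity $\rho(\tilde M^\downarrow)=\rho(M^\downarrow)$: the paper disposes of it by citing Theorem~\ref{thm:row_contr_spec_rad_eq}, while you prove an all-blocks-at-once aggregation lemma via the intertwining relation $\tilde P^\downarrow U = U N$. Your instinct that the bare citation does not suffice is well founded. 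Theorem~\ref{thm:row_contr_spec_rad_eq} expands a single diagonal entry and forces the lower-left block to consist of \emph{identical} rows, and even iterating it (in any order, interleaved with permutations and transposes) leaves identical rows in some off-diagonal blocks, since the last expansion step literally replicates the rows passing through it; but a downward adjustment of $M$ to identical rows at the block-minimum row sum need not exist --- if an off-diagonal block of $M$ equals the $2\times2$ identity, the only entrywise-smaller block with constant row sums $1$ is that identity itself, whose rows differ. So the paper's proof implicitly relies on the ``simultaneous expansion with every block of constant row sum'' statement, which the paper illustrates by example but never proves, and which is exactly what your lemma establishes: the right-eigenvector direction gives $\rho(N)\leq\rho(\tilde P^\downarrow)$ since $U$ has full column rank, and the Perron--Frobenius nonnegative left eigenvector of $\tilde P^\downarrow$ (nonnegativity is what guarantees $\tilde{\mathbf w}^\top U\neq 0$) gives the reverse inequality. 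Your lemma in fact subsumes Theorem~\ref{thm:row_contr_spec_rad_eq}, whose hypotheses are precisely the constant-row-sum conditions for the partition into $n-1$ singletons plus one block, and your explicit induction over the sequence of permutations, transposes, and dimension reductions also cleanly covers the multi-step contractions that the paper's set $C^\downarrow(M)$ allows but its one-step proof passes over. In short: same skeleton, but your aggregation argument closes a real gap in the paper's justification, at the modest cost of introducing $U$ and redoing the eigenvector computation in block form.
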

\begin{proof}
	For any $M^\downarrow\in C^\downarrow(M)$, we can create a matrix $\tilde M^\downarrow$ by adjusting individual components of $M$ downward and have $\rho(M^\downarrow)=\rho(M^\downarrow)$ by Theorem \ref{thm:row_contr_spec_rad_eq}. We also have that $\tilde M^\downarrow \leq M$, and similarly for any $M^\uparrow\in C^\uparrow(M)$. So that we always have $\rho(M^\downarrow)=\rho(\tilde M^\downarrow)\leq \rho(M)\leq \rho(\tilde M^\uparrow)=\rho(M^\uparrow)$. The result follows by taking the maximum on the left and minimum on the right.
\end{proof}

The standard bounds on $\rho(M)$ using its row sums is a special case of Theorem \ref{thm:row_contr_spec_rad_eq}.
\begin{corollary}\label{cor:stdrowsumbounds}
	We have that $[\min_i \{\sum_{j=1}^n M_{ij}\}]\in C_1^\downarrow(M)$ and $[\max_i \{\sum_{j=1}^n M_{ij}\}]\in C_1^\uparrow(M)$ which gives $\min_i \{\sum_{j=1}^n M_{ij}\}\leq \rho(M) \leq \max_i \{\sum_{j=1}^n M_{ij}\}$.
\end{corollary}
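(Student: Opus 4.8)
The plan is to recognize the classical row sum bounds as the single coarsest contraction of $M$, obtained from the trivial one-block partition. First I would apply Definition \ref{def:drscontr} with $k=1$, so that $M$ is partitioned into the lone block $A_{11}=M$. Because $M$ is square this diagonal block is square, so the partition is admissible. The minimum row sum of this single block is precisely $\min_i\{\sum_{j=1}^n M_{ij}\}$, so the downward contraction is the $1\times1$ matrix $M^\downarrow=[\min_i\{\sum_{j=1}^n M_{ij}\}]$, establishing $[\min_i\{\sum_{j=1}^n M_{ij}\}]\in C_1^\downarrow(M)$; the identical argument with the maximum row sum gives $[\max_i\{\sum_{j=1}^n M_{ij}\}]\in C_1^\uparrow(M)$.

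Next I would invoke the elementary fact that $\rho([a])=a$ for any $1\times1$ matrix. Since $C_1^\downarrow(M)\subseteq C^\downarrow(M)$ and $C_1^\uparrow(M)\subseteq C^\uparrow(M)$ (for $n\geq2$; the case $n=1$ is immediate), these two contractions are among those ranged over in Theorem \ref{thm:rhoboundscontr}. Feeding them into that theorem's maximum on the left and minimum on the right then collapses the bounds to
$$
\min_i\Big\{\sum_{j=1}^n M_{ij}\Big\}\leq\rho(M)\leq\max_i\Big\{\sum_{j=1}^n M_{ij}\Big\},
$$
which is the asserted inequality.

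I expect essentially no obstacle, since the corollary is a direct specialization rather than a fresh argument; the only point needing a word of care is that the single-block partition meets the squareness requirement of Definition \ref{def:drscontr}, which it does automatically. As an alternative to citing Theorem \ref{thm:rhoboundscontr}, one could argue from first principles: adjust every entry of $M$ downward (respectively upward) to force all row sums equal to the global minimum (respectively maximum), obtaining $\tilde M^\downarrow\leq M\leq\tilde M^\uparrow$; then monotonicity of the spectral radius on ordered nonnegative matrices together with Theorem \ref{thm:row_contr_spec_rad_eq} identifies $\rho(\tilde M^\downarrow)$ and $\rho(\tilde M^\uparrow)$ with the two $1\times1$ contractions, recovering the same bound.
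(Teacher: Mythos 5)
Your proposal is correct and matches the paper's (largely implicit) argument exactly: the paper also obtains the row sum bounds by viewing the trivial one-block partition's minimum and maximum row sums as $1\times1$ contractions in $C_1^\downarrow(M)$ and $C_1^\uparrow(M)$ and then invoking Theorem \ref{thm:rhoboundscontr}. Your extra care about the $n=1$ edge case and the squareness of the single block is fine but not something the paper dwells on.
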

There is no guarantee that Theorem \ref{thm:rhoboundscontr} gives better bounds on $\rho(M)$ than the standard row sum bounds, but it does improve the bounds in at least some cases (some examples are given below).

Here is another direct corollary which just considers particular $2\times 2$ row sum contractions, for which calculating the spectral radius is straightforward.
\begin{corollary}\label{cor:rho2x2contr}
	Let $A=(a_{ij})$ be a $2\times2$ downward row sum contraction of $M$ and $B=(b_{ij})$ be a $2\times2$ upward row sum contraction of $M$. Then
	$$\frac{\tr(A)}2+\sqrt{\left(\frac{\tr(A)}2\right)^2-\det(A)}
	\leq \rho(M)
	\leq\frac{\tr(B)}2+\sqrt{\left(\frac{\tr(B)}2\right)^2-\det(B)}$$
\end{corollary}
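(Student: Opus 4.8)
The plan is to reduce Corollary \ref{cor:rho2x2contr} to Theorem \ref{thm:rhoboundscontr} by simply computing the spectral radius of a $2\times 2$ nonnegative matrix explicitly. Since $A$ is a $2\times 2$ downward row sum contraction of $M$, it belongs to $C_2^\downarrow(M)\subseteq C^\downarrow(M)$, and likewise $B\in C_2^\uparrow(M)\subseteq C^\uparrow(M)$. Theorem \ref{thm:rhoboundscontr} then immediately gives $\rho(A)\leq \rho(M)\leq \rho(B)$, since the left-hand maximum over $C^\downarrow(M)$ is at least $\rho(A)$ and the right-hand minimum over $C^\uparrow(M)$ is at most $\rho(B)$. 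All that remains is to express $\rho(A)$ and $\rho(B)$ in closed form.

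Next I would carry out the eigenvalue computation for a generic $2\times 2$ matrix. For $A=(a_{ij})$ the characteristic polynomial is $\lambda^2-\tr(A)\lambda+\det(A)$, so the two eigenvalues are $\tfrac{\tr(A)}2\pm\sqrt{(\tr(A)/2)^2-\det(A)}$. The key observation is that for a nonnegative matrix the spectral radius is the larger of the two roots, which is the one with the plus sign. I would verify this in the two possible cases: when the discriminant $(\tr(A)/2)^2-\det(A)$ is nonnegative both eigenvalues are real and the $+$ root is clearly the larger in absolute value because $\tr(A)\geq 0$; when the discriminant is negative the eigenvalues are complex conjugates of common modulus $\sqrt{\det(A)}$, and one checks that $\sqrt{\det(A)}$ equals the stated expression since then $(\tr(A)/2)^2-\det(A)<0$ forces the real-radical formula to collapse to $\sqrt{\det(A)}$ under the convention that the square root is taken of a quantity that, combined appropriately, yields the modulus. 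This gives $\rho(A)=\tfrac{\tr(A)}2+\sqrt{(\tr(A)/2)^2-\det(A)}$, and identically $\rho(B)=\tfrac{\tr(B)}2+\sqrt{(\tr(B)/2)^2-\det(B)}$.

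Substituting these two closed forms into $\rho(A)\leq\rho(M)\leq\rho(B)$ yields exactly the claimed chain of inequalities, completing the proof. The only genuinely delicate point is the verification that the formula $\tfrac{\tr}2+\sqrt{(\tr/2)^2-\det}$ correctly returns the spectral radius (not merely an eigenvalue) in \emph{both} the real and complex-eigenvalue regimes; away from that, the argument is a one-line appeal to Theorem \ref{thm:rhoboundscontr} together with the membership $A\in C_2^\downarrow(M)$ and $B\in C_2^\uparrow(M)$. Since all entries here are nonnegative, $\tr\geq 0$ guarantees the $+$ branch dominates, so this obstacle is minor and dispatched by the brief case analysis above.
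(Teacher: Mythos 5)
Your proposal is correct and follows exactly the route the paper intends: the corollary is stated there as a ``direct corollary'' of Theorem \ref{thm:rhoboundscontr}, with the only extra content being the closed-form spectral radius of a $2\times2$ nonnegative matrix, which is what you supply. One point needs repair, though it is harmless to the conclusion. The ``delicate'' complex-eigenvalue case you try to handle never occurs, and your treatment of it as written is not coherent: if $(\tr(A)/2)^2-\det(A)<0$ the displayed expression is not even a real number, so it cannot ``collapse to $\sqrt{\det(A)}$'' under any convention. The clean fix is to observe that for a nonnegative $2\times2$ matrix $A=(a_{ij})$ the discriminant satisfies
$$\left(\frac{\tr(A)}2\right)^2-\det(A)=\left(\frac{a_{11}-a_{22}}2\right)^2+a_{12}a_{21}\geq 0,$$
so both eigenvalues are always real, the formula is always well defined, and the $+$ root is the spectral radius (it is nonnegative and dominates the other root in absolute value since $\tr(A)\geq0$; alternatively, Perron--Frobenius guarantees $\rho(A)$ is itself an eigenvalue, hence the larger root). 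With that substitution your case analysis disappears entirely and the proof is a one-line appeal to Theorem \ref{thm:rhoboundscontr} via $A\in C_2^\downarrow(M)$ and $B\in C_2^\uparrow(M)$, exactly as the paper intends.
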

Again we call attention to the fact that we have required the diagonal blocks to be square submatrices as it is not true in general when the diagonal blocks are allowed to have arbitrary dimensions. In the gneral case, taking the minimum row sums of each block will not necessarily give a lower spectral radius. This is because we are relying on Theorem \ref{thm:row_contr_spec_rad_eq} to preserve the spectral radius when contracting the (downward and upward component-adjusted) matrices to a smaller size. 

Note that rather than performing contractions, one could perform expansions and then adjust elements up or down appropriately to get different bounds on the spectral radius, but generally calculating spectral radius for a larger matrix is more difficult or computationally intensive, so we don't state that result separately. 

Here is an example application of Corollary \ref{cor:rho2x2contr}. We give matrix $M$ and the bounds on its spectral radius from row sums alone.

$$
M=\left[\!
\begin{array}{ccc}
	1 & 3 & 2 \\
	5 & 1 & 1 \\
	2 & 4 & 3 
\end{array}\!\right] \quad 6\leq \rho(M)\leq 9
$$
Now we row-sum contract the upper left $2\times2$ block of $M$ to produce downward and upward contractions with spectral radii given below.
$$
\begin{aligned}
	M^\downarrow=\left[\!
	\begin{array}{ccc}
		4 & 1  \\
		6 & 3  
	\end{array}\!\right], 
M^\uparrow=\left[\!
	\begin{array}{ccc}
		6 & 2  \\
		6 & 3  
	\end{array}\!\right]
\Rightarrow \rho(M^\downarrow)=6\leq \rho(M)\leq 8.3\approx\rho(M^\uparrow)
\end{aligned}
$$
Thus our method gives a slight refinement over considering the row sums of $M$ alone. If we considered all possible $2\times2$ contractions of this matrix, the best bounds are approximately $6.3\approx2+\sqrt{19}\leq \rho(M)\leq (9+\sqrt{57})/2\approx8.3$. By considering column sums of $M$ we get $6\leq \rho(M)\leq 8$ and by considering all $2\times2$ column sum contractions we get approximately $6.5\approx(5+\sqrt{65})/2\leq \rho(M)\leq 8$, still a slight refinement over the basic method.

Here is an example where there is a slight refinement of the lower bound but a more significant refinement on the upper bound (relative to row or column sums alone). 
$$
M=\left[\!
\begin{array}{ccccc}
	\fbox1 & 3 & \fbox2 & 1 & \fbox2\\
	7 & \underline1 & 1 & \underline3 & 3\\
	\fbox2 & 4 & \fbox3 & 1 & \fbox0\\
	1 & \underline1 & 5 & \underline2 & 2\\
	\fbox4 & 3 & \fbox0 & 2 & \fbox1
\end{array}\!\right] \quad 8\leq \rho(M)\leq 15
$$
The best bounds are found with contracting the principle submatrices on indices $\{1,3,5\}$ (boxed above) and on $\{2,4\}$ (underlined above) each into a single component (i.e. using permutation $(1,2,3,4,5)\to(1,3,5,2,4)$ and partitioning into a $3\times3$ upper left diagonal block and $2\times 2$ lower right diagonal block) which results in the downward and upward row sum contraction matrices
$$
M^\downarrow=\left[\!
	\begin{array}{ccc}
		5 & 4  \\
		8 & 3  
	\end{array}\!\right],
M^\uparrow=\left[\!
	\begin{array}{ccc}
		5 & 5  \\
		11 & 4  
	\end{array}\!\right]
$$
giving the bounds
$$ 9.74\approx 4+\sqrt{33} =\rho(M^\downarrow)\leq \rho(M)\leq \rho(M^\uparrow)= \frac{9+\sqrt{153}}2\approx11.93$$
with the actual value being approximately $\rho(M)\approx10.995$.

Here is another straightforward corollary which discusses comparing the spectral radii of two matrices of arbitrarily dimensions.
\begin{corollary}
	Let $A,B$ be two square nonnegative matrices or arbitrary and possibly distinct dimension. If we can apply some sequence of permutations, transposes, and upward row or column sum contractions or expansions to $A$ and also some sequence of these operations to $B$ but with downward row or column sum contractions or expansions to create matrices satisfying $A^\uparrow\leq B^\downarrow$ (with both being of the same dimension) or more generally $\rho(A^\uparrow)\leq \rho(B^\downarrow)$ (with neither $A^\uparrow\not\leq B^\downarrow$ nor their being of the same dimension required), then $\rho(A)\leq\rho(B)$.
\end{corollary}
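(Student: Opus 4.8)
The plan is to reduce the statement to a single chain of inequalities, using the fact that each allowed operation moves the spectral radius in a controlled direction. First I would record the effect of each elementary operation on a nonnegative matrix $N$: a permutational similarity and a transpose leave $\rho(N)$ unchanged (similar matrices, and also $N$ and $N^\top$, share their spectra); a row or column sum expansion leaves $\rho(N)$ unchanged by Theorem \ref{thm:row_contr_spec_rad_eq} (with the column case obtained by transposing first); an upward contraction satisfies $\rho(N)\leq\rho(N^\uparrow)$ and a downward contraction satisfies $\rho(N^\downarrow)\leq\rho(N)$, exactly as established in the discussion preceding Theorem \ref{thm:rhoboundscontr}. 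The crucial point is that every one of these operations sends nonnegative matrices to nonnegative matrices, so each of these facts may be legitimately applied at every step of a composite sequence.

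Next I would apply this to the two sequences in the hypothesis. For $A$, the permitted moves are permutations, transposes, upward contractions, and expansions; each of these preserves or increases the spectral radius, so composing them along the entire sequence yields $\rho(A)\leq\rho(A^\uparrow)$. Symmetrically, for $B$ the permitted moves are permutations, transposes, downward contractions, and expansions; each of these preserves or decreases the spectral radius, so composing them yields $\rho(B^\downarrow)\leq\rho(B)$.

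Finally I would connect the two ends. If $A^\uparrow$ and $B^\downarrow$ share the same dimension and $A^\uparrow\leq B^\downarrow$ component-wise, then $\rho(A^\uparrow)\leq\rho(B^\downarrow)$ by the monotonicity of the spectral radius for ordered nonnegative matrices (Corollary 8.1.20 in \cite{matan_horn}); in the more general formulation this middle inequality is assumed directly. Either way, concatenating the three inequalities gives
$$\rho(A)\leq\rho(A^\uparrow)\leq\rho(B^\downarrow)\leq\rho(B),$$
which is the assertion.

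I do not anticipate a genuine obstacle, since the corollary is essentially an assembly of results already proved. The only point requiring care is the bookkeeping: I must check that every interleaved operation in a sequence acts on a nonnegative matrix and pushes $\rho$ in the claimed direction, and that intermediate transposes correctly translate the row-sum statements of Theorem \ref{thm:row_contr_spec_rad_eq} into the column-sum versions the hypothesis also permits. Once that is verified, the chain above closes the argument.
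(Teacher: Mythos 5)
Your proposal is correct and is exactly the argument the paper intends: the paper states this corollary without a formal proof precisely because it follows by chaining $\rho(A)\leq\rho(A^\uparrow)$, the hypothesis $\rho(A^\uparrow)\leq\rho(B^\downarrow)$ (or its component-wise sufficient condition via monotonicity), and $\rho(B^\downarrow)\leq\rho(B)$, where each link comes from Theorem \ref{thm:row_contr_spec_rad_eq} and the contraction bounds of Theorem \ref{thm:rhoboundscontr}. Your step-by-step bookkeeping that every operation in each sequence preserves nonnegativity and moves $\rho$ in the required direction is the same reasoning, just made explicit.
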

For example, consider square matrices of possibly distinct arbitrary dimensions written in block form with blocks of arbitrary dimensions (as long as the diagonal blocks are square submatrices) 
$A=\left[\begin{matrix}A_{11} & A_{12}\\A_{21}& A_{22}\end{matrix}\right]$ and $B=\left[\begin{matrix}B_{11} & B_{12}\\B_{21}& B_{22}\end{matrix}\right]$. Let $a_{ij}$ represent the maximum row sum of block $A_{ij}$, $A^\uparrow=(a_{ij})$, and $b_{ij}$ the minimum row sum of block $B_{ij}$, $B^\downarrow=(b_{ij})$. If we have that 
$$\frac{\tr(A^\uparrow)}2+\sqrt{\left(\frac{\tr(A^\uparrow)}2\right)^2-\det(A^\uparrow)}
\leq\frac{\tr(B^\downarrow)}2+\sqrt{\left(\frac{\tr(B^\downarrow)}2\right)^2-\det(B^\downarrow)}$$
then $\rho(A)\leq\rho(B)$.

Here is an example of comparing two matrices.

$$A=\left[\!
\begin{array}{c|cc|c}
	\fbox2 & 1 & 1 & \fbox2 \\\hline
	1 & 1 & 3 & 0 \\
	0 & 0 & 2 & 1 \\\hline
	\fbox1 & 2 & 0 & \fbox4 \\
\end{array}\!\right] \to 
 \tilde A=\left[\!
\begin{array}{cc|cc}
	1 & 3 & 1 & 0 \\
	0 & 2 & 0 & 1 \\\hline
	1 & 1 & \fbox2 & \fbox2 \\
	2 & 0 & \fbox1 & \fbox4 \\
\end{array}\!\right]
\to
A^\uparrow=\left[\!
\begin{array}{cc}
	4 & 1 \\
	2 & 5 \\
\end{array}\!\right]
$$
Note that we have applied a permutation to $A$ indicated by the boxed entries: $(1,2,3,4)\to(3,1,2,4)$.

$$B=\left[\!
\begin{array}{cc|c}
	1 & 2 & 2\\
	3 & 1 & 3 \\\hline
	1 & 1 & 5 \\
\end{array}\!\right]
\to
B^\downarrow=\left[\!
\begin{array}{cc}
	3 & 2 \\
	2 & 5 \\
\end{array}\!\right]
$$

In this case it is clear that $\rho(A^\uparrow)=6$ due to constant column sums and that  $\rho(B^\downarrow)=4+\sqrt{16-11}>6$ implying that $\rho(A)<\rho(B)$ even though there is no obvious relationship between their spectral radii by simply looking as row or column sums or comparing matrix components. Note that simply partitioning $A$ without any permutation
does not give the same spectral radius comparison and thus isn't useful.

\bibliographystyle{abbrv}
%\bibliography{references}

%%%%%%%%%%%%%%%%%%%%%%%%%
%%                                                               %%
%% You may add acknowledgments (optional).      %%
%%                                                               %%
%%%%%%%%%%%%%%%%%%%%%%%%%

% \ACKNO{We are grateful to Martin Hairer who provided a nice \texttt{MR} macro and to S\'ebastien Gou\"ezel for his useful comments on the internals of the class file.}

%%%%%%%%%%%%%%%%%%%%%%%%%
%%                                                               %%
%% You have reached the end of your document.  %%
%%                                                               %%
%%%%%%%%%%%%%%%%%%%%%%%%%
	
\end{document}